\newtheorem{theorem}{Theorem}[section]
\newtheorem{proposition}[theorem]{Proposition}
\newtheorem{corollary}[theorem]{Corollary}
\newtheorem{definition}[theorem]{Definition}
\numberwithin{equation}{section} 
\newcommand{\cqfd}{\hfill{\small $\Box$}} 
 \newenvironment{proof}[1][]{{\bf Proof #1 : }}{\hfill \cqfd} 
\newcommand{\To}{\longrightarrow}
\newcommand{\cn}{\mathrm{cn}} 
\newcommand{\pcn}{\mathrm{pcn}}
\newcommand\tgt[1]{{}^{T}\kern-1pt #1}
\newcommand\adi[1]{{}^{ad}\kern-1pt #1}
 \def\QQ{{\mathbb{Q}}} \def\RR{{\mathbb{R}}}
 \def\ZZ{{\mathbb{Z}}}
 \def\bZ{{\mathbf{Z}}}
\def\cA{{\mathcal{A}}}  
  \def\cF{{\mathcal{F}}}
 \def\cH{{\mathcal{H}}} 
 \def\cK{{\mathcal{K}}} 
\def\cP{{\mathcal{P}}}  
\def\cS{{\mathcal{S}}}
\title{On Fredholm boundary conditions on manifolds with corners I: Global corner's cycles obstructions.\footnote{The first two authors were partially supported by ANR-14-CE25-0012-01.} }
\author{P. Carrillo Rouse, J.M. Lescure and M. Velasquez} 
\date{\today} 
\begin{document}

\maketitle 

\begin{center}
{\bf Abstract}
\end{center}

\noindent Given a connected manifold with corners of any codimension there is a very basic and computable homology theory called conormal homology defined in terms of faces and orientations of their conormal bundles, and whose cycles correspond geometrically to corner's cycles. 

\noindent Our main theorem is that, for any manifold with corners $X$ of any codimension, there is a natural and explicit morphism
$$K_*(\cK_b(X)) \stackrel{T}{\longrightarrow} H^{pcn}_*(X,\mathbb{Q})$$
between the $K-$theory group of the algebra $\cK_b(X)$ of $b$-compact operators for $X$ and  the periodic conormal homology group with rational coeficients, and that $T$ is a rational isomorphism. 

\noindent As shown by the first two authors in a previous paper this computation implies that the rational groups $H^{pcn}_{ev}(X,\mathbb{Q})$ provide an obstruction to the Fredholm perturbation property for compact connected manifold with corners.

\noindent The difference with respect to the previous article of the first two authors in which they solve this problem for low codimensions is that we overcome in the present article the problem of computing the higher spectral sequence K-theory differentials associated to the canonical filtration by codimension by introducing an explicit topological space whose singular cohomology is canonically isomorphic to the conormal homology and whose K-theory is naturally isomorphic to the $K-$theory groups of the algebra $\cK_b(X)$.

\tableofcontents

\section{Introduction}


In this paper we continue our study on obstructions on Fredholm boundary conditions on manifolds with corners initiated in \cite{CarLes}, let us explain and motivate the context of the present work. On a smooth compact manifold, ellipticity of (classical) pseudodifferential operators is equivalent to Fredholmness, and the vanishing of the Fredholm index of an elliptic  pseudodifferential operator is equivalent to its invertibility after perturbation by a regularizing operator. In the case of a smooth manifold with boundary, not every elliptic operator is Fredholm and it is known since Atiyah and Bott that there exist obstructions to  the existence of local boundary conditions in order to upgrade an elliptic operator into a Fredholm boundary value problem. Nonetheless, if one moves to non-local boundary conditions, obstructions disappear: for instance, not every elliptic (b-operator)   pseudodifferential operator is Fredholm but it can be perturbed with a regularizing operator to become Fredholm. This non trivial fact, which goes back to Atiyah, Patodi and Singer \cite{APS1}, can also be obtained from the vanishing of a boundary analytic index 
(see \cite{Mel-Pia1997-1,Mel-Pia1997-2,MontNis}, and below). In fact, in this case the boundary analytic index takes values in the $K_0$-theory group of the algebra of regularizing operators and this K-theory group is easily  seen to vanish. It is known that obstructions to the existence of perturbations of elliptic operators into Fredholm ones reappear in the case of manifolds with corners of arbitrary codimension (\cite{Bunke,NSS2010}) (this includes for instance many useful domains in Euclidean spaces). In this paper we will show that the global topology/geometry of the corners and the way the corners form cycles enter in a fundamental way in a primary obstruction to give Fredholm boundary  conditions. As we will see the answer passes by the computation of some $K$-theory groups. We explain now with more details the problem and the content of this paper.

Using K-theoretical tools for solving index problems was the main asset in the series of papers by Atiyah-Singer (\cite{AS,AS3}).  In the case of manifolds with boundary, $K$-theory is still useful to understand the vanishing of the obstruction to the existence of pertubations of elliptic operators into Fredholm ones (even if $K$-theory is not essential in the computation of analytical indices \cite{APS1}), and a fortiori to understand this obstruction in the case of families of manifolds with boundary (\cite{Mel-Pia1997-1,Mel-Pia1997-2,Mel-Roc2006}).  
For manifolds with corners, Bunke \cite{Bunke} has delivered  for Dirac type operators a complete study of the obstruction, which lives in the homology of a complex associated with the faces of the manifold. As  observed in $\cite{CarLes}$, this homology also appears as the $E^2$-term of the spectral sequence computing the $K$-group that contains the obstruction to Fredholmness for general elliptic $b$-pseudodifferential operators. Nazaikinskii, Savin and Sternin also use $K$-theory to express the obstruction for their pseudodifferential calculus on manifolds with corners and stratified spaces \cite{NSS2,NSS2010}.

Let us briefly recall the framework in which we are going to work.  The algebra of pseudodifferential operators $\Psi_b^*(X)$ associated to any manifold with corners $X$ is defined in \cite{MelPia}: it generalizes the case of manifolds with boundary treated in \cite{Mel} (see also \cite[Section 18.3]{Horm-3}).  The elements in this algebra are called $b-$pseudodifferential operators\footnote{To simplify we discuss only the case of scalar operators, the passage to operators acting on sections of vector bundles is done in the classic way.}, the subscript $b$ identifies these operators as obtained by microlocalization of the Lie algebra of $C^\infty$ vector fields on $X$ tangent to the boundary. This Lie algebra of vector fields can be explicitly obtained as sections of the so called $b$-tangent bundle $^bTX$ (compressed tangent bundle that we will recall below).  The b-pseudodifferential calculus has the classic and expected properties. In particular there is a principal symbol map
$$\sigma_b:\Psi_b^m(X)\to S^{[m]}(^bT^*X).$$  
Ellipticity has the usual meaning, namely invertibility of the principal symbol. Moreover (discussion below and Theorem 2.15 in \cite{MelPia}), an operator is elliptic if and only\footnote{Notice that this remark implies that to an elliptic $b$-pseudodifferential operator one can associate an "index" in the algebraic K-theory group $K_0(\Psi_b^{-\infty}(X))$ (classic construction of quasi-inverses).} if it has a quasi-inverse modulo $\Psi_b^{-\infty}(X)$. Now,  $\Psi_b^{-\infty}(X)$ also contains non compact operators and compacity is there characterized by the vanishing of a suitable indicial map (p.8 ref.cit.). Elliptic $b$-pseudodifferential operators being invertible modulo compact operators -and hence Fredholm\footnote{see p.8 in \cite{MelPia} for a characterization of Fredholm operators in terms of an indicial map or \cite{Loya} thm 2.3 for the proof of Fully ellipticity iff Fredholm}-, are usually said to be  {\sl fully} elliptic. 

The norm  closure $\cK_b(X)$  of  $\Psi_b^{-\infty}(X)$ into the bounded operators on $L^2_b(X)$ fits in the short exact sequence of $C^*$-algebras:
\begin{equation}\label{Introbcompact}
\xymatrix{
0\ar[r]&\cK(X)\ar[r]^-{i_0}&\cK_b(X)\ar[r]^-{r}&\cK_b(\partial X)\ar[r]&0
}
\end{equation}
It will be fundamental in this paper to have a groupoid description of the involved $C^{*}$-algebras. This is explained in Section \ref{sectionAnalyticindexmorphism}. 

In order to understand how the above sequence enters into the study of Fredholm Perturbation properties we need to settle some definitions.

{\bf Analytic and Boundary analytic Index morphism:}  Every lliptic $b$-pseudodifferential operator has  a principal symbol class in $K^0_{top}(^bT^*X)$, and possesses an  interior parametrix that brings a class in $K_0(\cK_b(X))$ called the {\sl analytical index class}. Both classes are related as follows. Consider the short exact sequence 
\begin{equation}\label{IntrobKses}
\xymatrix{
0\ar[r]&\cK_b(X)\ar[r]&\overline{\Psi_b^0(X)}\ar[r]^-{\sigma_b}&C(^bS^*X)\ar[r]&0.
}
\end{equation}
After applying the $K$-functor, it gives rise to the boundary morphism:
$K_1(C(^bS^*X))\to K_0(\cK_b(X))$ that can be factorized canonically into a  morphism
\begin{equation}\label{intro:ana-ind}
\xymatrix{
K^0_{top}(^bT^*X)\ar[r]^-{Ind^a_X}&K_0(\cK_b(X))
}
\end{equation}
called {\it the Analytic Index morphism of $X$}, which is the one that maps the principal symbol class to the analytical index class of a given elliptic $b$-operator. 

Alternatively, we can compose \eqref{intro:ana-ind} by $r$ in \eqref{IntrobKses}:
\begin{equation}\label{Banalyticindex}
\xymatrix{
K^0_{top}(^bT^*X)\ar[r]^-{Ind^\partial_X}&K_0(\cK_b(\partial X))
}
\end{equation}
and call the result {\it the Boundary analytic index morphism of $X$}. In fact $r:K_0(\cK_b(X))\to K_0(\cK_b(\partial X))$ is an isomorphism if $\partial X\not=\emptyset$, proposition 5.6 in \cite{CarLes}, and so the two indices above are essentially the same. This describes the role of \eqref{Introbcompact} into the relationship between ellipticity and $K$-theory. Notice that in particular there is no contribution of the Fredholm index in the $K_0$-analytic index. 

To express how \eqref{Introbcompact} and the previous index maps relate to the Fredholm obstruction we introduce the following vocabulary:
\begin{definition} Let $D\in \Psi_b^m(X)$ be elliptic. We say that $D$ satisfies: 
\begin{itemize}
\item  the {\it Fredholm Perturbation Property} $(\cF\cP)$ if there is   $R\in \Psi_b^{-\infty}(X)$ such that $D+R$ is  fully elliptic.  
\item the  {\it stably  Fredholm Perturbation Property} $(\cS\cF\cP)$ if $D\oplus 1_H$ satisfies $(\cF\cP)$ for some identity operator $1_H$.
\end{itemize}
\end{definition}
 The following result is due to  \cite{NSS2} (see \cite{CLM} for an alternative proof using deformation groupoids). 
\begin{theorem}\label{intro:recall-thm}
Let $D$ be an elliptic $b$-pseudodifferential operator on a compact manifold with corners $X$. Then  $D$ satisfies $(\cS\cF\cP)$ if and only if 
\(
 Ind^\partial_X([\sigma_b(D)])=0 \,\,\text{in}\,\,  K_0(\cK_b(\partial X))
\).\\
In particular if $D$ satisfies $(\cF\cP)$ then its boundary analytic index vanishes. 
\end{theorem}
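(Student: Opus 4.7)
I would base the argument on the six-term exact $K$-theory sequence associated with \eqref{Introbcompact}. Since $\cK(X)$ is the algebra of compact operators on the separable Hilbert space $L^2_b(X)$, we have $K_0(\cK(X))\cong\ZZ$ and $K_1(\cK(X))=0$, so the crucial piece of exactness reduces to $\im(i_{0,*})=\ker(r_*)\subseteq K_0(\cK_b(X))$. Recalling from \eqref{Banalyticindex} that $\text{Ind}^\partial_X = r_*\circ \text{Ind}^a_X$, it suffices to prove that $D$ satisfies $(\cS\cF\cP)$ if and only if the class $[D]:=\text{Ind}^a_X([\sigma_b(D)])\in K_0(\cK_b(X))$ lies in $\im(i_{0,*})$.

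\textbf{Forward direction.} Suppose $D\oplus 1_H + R$ with $R\in \Psi_b^{-\infty}(X)$ is fully elliptic. Because $R\in\cK_b(X)$ and adjoining an identity operator is $K$-theoretically neutral, the definition of the analytic index class through the boundary of \eqref{IntrobKses} gives $[D]=[D\oplus 1_H+R]$ in $K_0(\cK_b(X))$. A fully elliptic operator is invertible modulo $\cK(X)$, so this common class is canonically the image under $i_{0,*}$ of its usual Fredholm index $n\in K_0(\cK(X))\cong\ZZ$. Therefore $[D]=i_{0,*}(n)$, hence $\text{Ind}^\partial_X([\sigma_b(D)])=r_*\,i_{0,*}(n)=0$ by exactness. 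The last sentence of the theorem is immediate, since $(\cF\cP)$ trivially implies $(\cS\cF\cP)$.

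\textbf{Reverse direction.} Suppose $\text{Ind}^\partial_X([\sigma_b(D)])=0$; by exactness $[D]=i_{0,*}(n)$ for some $n\in\ZZ$. The task is to realize this $K$-theoretic equality as an honest perturbation. Choose a model fully elliptic $b$-operator $D_0$ of Fredholm index $n$; then $[D_0]=i_{0,*}(n)=[D]$, so $D$ and $D_0$ are stably equivalent as elements of $K_0(\cK_b(X))$. Reading this equality back through the principal symbol extension \eqref{IntrobKses}, it translates into a homotopy of stabilized invertible symbols that can be lifted to a path in $\overline{\Psi_b^0(X)}$ connecting stabilizations of $D$ and $D_0$ modulo $\Psi_b^{-\infty}(X)$. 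The endpoint of this lift produces the required smoothing correction $R\in\Psi_b^{-\infty}(X)$ making $D\oplus 1_H+R$ a stabilization of $D_0$, hence fully elliptic.

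\textbf{Main obstacle.} The genuinely hard step is the reverse direction, specifically the passage from the $K$-theoretic identity $[D]\in\im(i_{0,*})$ to a perturbation living in the smoothing ideal $\Psi_b^{-\infty}(X)$ rather than merely in its norm closure $\cK_b(X)$. This requires both a density/approximation argument and the ability to lift invertible symbol-level homotopies through the $b$-calculus. It is precisely this step that the groupoid-theoretic proof of \cite{CLM} streamlines, by performing all constructions directly at the level of the deformation groupoid where the smoothness of the relevant data is automatic.
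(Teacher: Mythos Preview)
The paper does not give its own proof of this theorem; it attributes the result to \cite{NSS2} (with an alternative groupoid proof in \cite{CLM}) and, crucially, singles out \cite[Proposition~4.3]{DebSkJGP} as the ingredient that upgrades the characterization of $(\cH\cF\cP)$ to one of $(\cS\cF\cP)$. The route the paper points to is therefore: (i) show that $Ind^\partial_X([\sigma_b(D)])=0$ is equivalent to $(\cH\cF\cP)$, using exactness at the symbol level; (ii) invoke the equivalence $(\cH\cF\cP)\Leftrightarrow(\cS\cF\cP)$ from \cite{DebSkJGP}.

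Your forward direction is essentially correct. Your reverse direction, however, contains a genuine gap, and it is not the one you flag. You assert that the equality $[D]=[D_0]$ in $K_0(\cK_b(X))$ ``translates into a homotopy of stabilized invertible symbols''. This does not follow: the map $Ind^a_X:K^0_{top}({}^bT^*X)\to K_0(\cK_b(X))$ (equivalently the boundary map $I$ of \eqref{bKses}) has no reason to be injective, so equal analytic index classes do not force $[\sigma_b(D)]=[\sigma_b(D_0)]$. What exactness of \eqref{bKses} actually yields is that $\sigma_b(D)$ and $\sigma_b(D_0)$ differ, after stabilization, by the symbol of an \emph{invertible} element $u\in\overline{\Psi^0_b(X)}$. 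You would then need to argue that $D_0u$ is a genuine fully elliptic $b$-operator with symbol class $[\sigma_b(D)]$, which already entangles the density issues you mention with a further nontrivial step, and still does not produce a smoothing perturbation of $D$ itself.

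The argument indicated by the paper avoids this by working at the symbol level from the start. The vanishing of $Ind^\partial_X([\sigma_b(D)])$ is read through the exact sequence involving the noncommutative tangent space $\cT_{nc}X$ of Section~\ref{Fredsubsection}: it says exactly that $[\sigma_b(D)]\in K^0_{top}({}^bT^*X)$ lifts to $K_0(C^*(\cT_{nc}X))$, i.e.\ is realized by the full symbol of some fully elliptic operator. That is $(\cH\cF\cP)$. The passage $(\cH\cF\cP)\Rightarrow(\cS\cF\cP)$ is then a separate, nontrivial input (\cite[Proposition~4.3]{DebSkJGP}), and this is where the actual work of producing a smoothing perturbation is concentrated. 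Your sketch conflates these two steps and, in doing so, replaces a clean exactness argument at the symbol level by an unjustified injectivity claim at the index level.
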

The present work  is motivated by the understanting of the $K$-groups that capture this obstruction, preferably in terms of the geometry and topology of the manifold with corners. As it happens, the only previously known cases are: 
\begin{itemize}
\item the $K$-theory of the compact operators $\cK(X)$, giving $K_0(\cK(X))=\bZ$ and $K_1(\cK(X))=0$, which is of course essential for classic index theory purposes; 
\item the $K$-theory of $\cK_b(X)$ for a smooth manifold with boundary, giving $K_0(\cK_b(X))=0$ and $K_1(\cK_b(X))=\bZ^{p-1}$ with $p$ the number of boundary components, which has the non trivial consequence that any elliptic $b$-operator on a manifold with boundary can be endowed with Fredholm boundary conditions;
\item the $K$-theory of $\cK_b(X)$ for $X$ a finite product of manifolds with corners of codimension less or equal to three, where we recall below the computation given in \cite{CarLes} in terms of very computable homology associated to the corner's faces. In particular, as shown by examples in ref.cit., from codimension 2 any possible free abelian group can arise as one of this K-theory groups. 
\end{itemize}

In this paper we address the case of arbitrary manifolds with corners and the geometrical tool used to describe the needed $K$-theory groups is {\sl conormal homology} \cite{Bunke,CarLes}. The conormal complex, initially considered in \cite{Bunke}, is made of faces provided with orientation of their conormal (trivial) bundle and the differential maps a co-orientated face of codimension $p$ to the sum of codimension $p-1$ faces -provided with the induced co-orientation-,   that contains it in their closures.   Bunke proved that the obstruction for the existence of a {\sl boundary taming} of a Dirac type operator on a manifold with corners $X$ is given by an explicit class in this homology (which also implicitly appears in the work of Melrose and Nistor in \cite{MelNis}, through the quasi-isomorphism in \cite[Corollary 5.5]{CarLes}. It is thus all but a surprise that conormal homology emerges from the computation of $K_*(\cK_b(X))$. Just recall that the faces decomposition of $X$ yields a filtration 
\begin{equation}\label{eq:K-filt0}
   \cK(L^2(\overset{\circ}{X}))=A_0\subset A_1 \subset \ldots\ A_d=A=\cK_b(X)
\end{equation}
already used by Melrose and Nistor in \cite{MelNis} and one of their main result is the expression of  the first differential (theorem 9 ref.cit.) of the corresponding spectral sequence $(E^*_{*,*}(\cK_b(X)),d^*_{*,*})$. In \cite{CarLes}, it is proved that:
\begin{equation}
   H_{p}^{\cn}(X) \simeq E^2_{p,0}(\cK_b(X)),
\end{equation}
and furthermore that the even/odd groups $H_{ev/odd}(X,\QQ)$ are isomorphic with $K_{0/1}(\cK_b(X))\otimes\QQ$  for product of manifolds of codimension at most 3 (tensorisation by $\QQ$ can be dropped of if $X$ itself is of codimension at most $3$ or if one factor is of codimension at most 2). This gave a concrete geometric description to the computations initiated in \cite{MelNis}, but after the paper \cite{CarLes}, the expression of the higher differentials remained unclear in the case of arbitrary codimension, that is, it remained unclear whether conormal homology is a satisfactory geometric replacement for $K_0(\cK_b(X))$. 

In this paper we overcome this problem by  using groupoid methods in order to find a topological space  $O_X$ equivalent to $\cK_b(X)=C^{*}(\Gamma_{b}(X))$ in $K$-theory (Section  \ref{secOX}). This leads to the main result of this paper, which generalizes \cite[ theorem 5.8]{CarLes}.

\begin{theorem}\label{intro:main-thm}
For every connected manifold with corners $X$ there are morphisms
\begin{equation}\label{intro:main-map}
T_{ev/odd}:K_{ev/odd}(\cK_b(X))\longrightarrow  H_{ev/odd}^{cn}(X)\otimes\mathbb{Q}
\end{equation}
inducing rational isomorphisms.
Explicitly, $T_*$ is given by the composition of
\begin{enumerate}
\item The Connes-Thom isomorphism 
\begin{equation}
CT_h:K_*(\cK_b(X))\to K^*_{top}(O_X),
\end{equation}
\item the Chern character morphism
\begin{equation}
K^*_{top}(O_X)\stackrel{ch}{\longrightarrow}H^{ev/odd}(O_X)\otimes\mathbb{Q},
\end{equation}
which is a rational isomorphism
and
\item the natural isomorphism 
\begin{equation}
B_*\otimes Id:H^{*}(O_X)\otimes\mathbb{Q}\stackrel{}{\longrightarrow}H_{*}^{pcn}(X)\otimes\mathbb{Q},
\end{equation}
described in the section \ref{subsechomOXvscnhom}.
\end{enumerate}
\end{theorem}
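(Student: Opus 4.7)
Since $T_*$ is defined as a composition of three morphisms, the strategy is to construct each factor, verify its naturality, and prove each is (at worst a rational) isomorphism; the composition then automatically induces a rational isomorphism. The three factors are of quite different natures: the first is a $K$-theoretic identification of a $C^*$-algebra with a topological space, the second is classical, and the third is combinatorial/cellular.

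The first and most substantial step is the construction of the topological space $O_X$ and of the Connes--Thom isomorphism $CT_h$. The starting point is the groupoid presentation $\cK_b(X) = C^*(\Gamma_b(X))$, where $\Gamma_b(X)$ is the $b$-groupoid of Section \ref{sectionAnalyticindexmorphism}, whose Lie algebroid is the $b$-tangent bundle ${}^bTX$. For each open face $F\subset X$ of codimension $k$, the restriction of $\Gamma_b(X)$ to $F$ is Morita equivalent to a bundle of abelian Lie groups with fiber $\mathbb{R}^k$, naturally identified with the interior of the conormal bundle of $F$. I would construct $O_X$ by assembling these open conormal bundles over the face stratification of $X$ (Section \ref{secOX}), and then prove that the iterated abelian Connes--Thom isomorphisms for the $\mathbb{R}^k$-factors at each stratum glue into a natural isomorphism
\begin{equation*}
CT_h: K_*(\cK_b(X)) \stackrel{\cong}{\longrightarrow} K^*_{top}(O_X).
\end{equation*}
The key compatibility is that this construction is indexed by the filtration \eqref{eq:K-filt0}, so that all higher differentials of the associated spectral sequence are automatically absorbed into the topology of $O_X$; this is precisely what replaces the direct computation of the higher differentials that was the obstacle in \cite{CarLes}.

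The second step is classical: once $O_X$ is realized (by its construction above) as a CW complex of finite type, the rational Chern character
\begin{equation*}
ch: K^*_{top}(O_X) \otimes \mathbb{Q} \stackrel{\cong}{\longrightarrow} H^{ev/odd}(O_X) \otimes \mathbb{Q}
\end{equation*}
is a natural rational isomorphism.

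The third step is the identification $B_*: H^*(O_X) \to H^{pcn}_*(X)$. Because $O_X$ is built from open conormal bundles indexed by the faces of $X$, it admits a canonical cell decomposition whose $p$-cells are in bijection with pairs (face of codimension $p$, orientation of its conormal bundle). I would then check that the cellular cochain complex of $O_X$ coincides with the conormal chain complex of $X$, including the matching of the cellular coboundary with the conormal differential (the signs agreeing with the induced co-orientation of codimension-$(p{-}1)$ faces containing a given codimension-$p$ face in their closure). Passing to the periodic version on both sides produces the announced natural map $B_*\otimes\Id$. The main obstacle in the entire argument is the first step: producing $O_X$ and verifying the iterated Connes--Thom isomorphism compatibly with the face stratification; Steps 2 and 3, once a satisfactory $O_X$ is available, are then essentially formal.
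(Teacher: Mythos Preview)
Your overall architecture---three steps, with the Chern character as the middle one---matches the paper, and Steps 2 and 3 are essentially as the paper does them (the paper uses a spectral sequence for $H^*(O_X)$ that collapses at $E_2$ because each stratum is a Euclidean space, which is your ``cellular'' computation). The real divergence is in Step 1, and there your proposal has a gap that is precisely the difficulty the paper is designed to circumvent.

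You propose to build $O_X$ by ``assembling open conormal bundles over the face stratification'' and then to obtain $CT_h$ by \emph{gluing iterated Connes--Thom isomorphisms stratum by stratum}. But checking that stratum-wise Connes--Thom isomorphisms glue compatibly across adjacent faces is tantamount to controlling the connecting maps of the filtration \eqref{eq:K-filt0} in $K$-theory, i.e.\ the higher differentials of the very spectral sequence whose computation was the obstacle in \cite{CarLes}. Your sentence ``all higher differentials \ldots\ are automatically absorbed into the topology of $O_X$'' is the desired conclusion, not an argument; with the gluing approach you describe, nothing makes it automatic.

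The paper's key move is different and global: choose an embedding $\iota:\tilde X\hookrightarrow\mathbb{R}^{N-n}$, use it together with the $\mathbb{R}^n$-factor of the Puff groupoid to define a single groupoid monomorphism $h:\Gamma_b(X)\to\mathbb{R}^N$, and form the semi-direct product $\Gamma_b(X)\rtimes_h\mathbb{R}^N$. This action is free and proper, so $O_X$ is simply its orbit space, and $CT_h$ is the composition of one Connes--Thom isomorphism for the $\mathbb{R}^N$-action with the Morita equivalence to $O_X$. No gluing or stratum-wise compatibility is needed; the global $\mathbb{R}^N$-action does all the work at once. The filtration of $O_X$ by codimension then reappears only in Step 3, where it is harmless because the cohomological spectral sequence has a single nonzero row and collapses for trivial reasons. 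That transfer of difficulty---from $K$-theory, where the spectral sequence is hard, to singular cohomology of a space with contractible strata, where it is trivial---is the actual content of the paper, and your proposal does not yet contain it.
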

We do not expect the isomorphism above to hold without tensorizing by $\QQ$ in general, even if many examples exist: manifolds of codimension at most $3$, product of such manifolds with one factor of codimension at most $2$, manifolds whose even conormal homology is torsion free.


Nevertheless, it is a straightforward consequence of Theorems \ref{intro:recall-thm} and \ref{intro:main-thm} that the rational groups $ H_{ev}^{cn}(X,\QQ)$ provide an obstruction to the Fredholm perturbation properties.  As observed   in \cite{CarLes},  the groups $H_{ev}^{cn}(X)$ tend to be non trivial. Even more interesting, for naturally geometric operators the boundary analytic indices do not vanish neither. Hence this motivates the study of the composition of \eqref {intro:ana-ind} with \ref{intro:main-map}, which will be called  {\sl Corner index morphism}:
\begin{equation}
K^0_{top}(^bT^*X)\stackrel{Ind^X_{cn}}{\longrightarrow} H_{ev}^{pcn}(X)\otimes \mathbb{Q}.
\end{equation}
This brings in natural questions such as:
\begin{enumerate}
\item Given an elliptic operator $D\in \Psi^*_b(X)$, can we express, in terms of corner cycles, the class
$Ind^X_{cn}([\sigma_D])$ ? And is it possible to refine the computation at the integral level ? 
\item Is it possible to give a topological  formula (in the spirit of the  Atiyah-Singer theorem) for  the Corner index morphism and then for the obstructions for Fredholm perturbations ? 
\end{enumerate}

We will study these questions in a subsequent paper.

\vspace{2mm}

{\bf Acknowledgements.} The first two authors want to thank Victor Nistor for very helpful discussions and for suggesting us a computation using the classic Chern character. The first and third authors want to thank the ANR SINGSTAR for support two collaboration research visits in Toulouse.

\section{Index theory for manifolds with corners via groupoids}\label{sectionAnalyticindexmorphism}

\subsection{Groupoids and manifolds with corners}

For background and notation about Lie groupoids and their relationship to $C^*$-algebras, $K$-theory, index theory and pseudodifferential analysis, the reader may consult \cite{DL10,NWX,MP,LautNist,LMV,HS83,Ren,AnRen} and references therein. For backgound about $b$-calculus the reader may consult \cite{Mel,Melmwec,MelPia,Loya} and references therein.
Here we closely follows the definition and notation of \cite{CarLes,CLM}.

We will consider  compact manifolds $X$ with embedded corners \cite{Melmwec}: we thus may fix once for all  a smooth compact manifold  $\tilde{X}$ 
and submersions $\rho_1,..., \rho_n : \tilde{X}\longrightarrow \RR$  such that:
\begin{enumerate}
\item  $ X = \bigcap_{1\le j\le n} \rho_j^{-1}([0,+\infty)) \subset \tilde{X}$ 
\item Setting $H_j = \rho_j^{-1}( \{0\})\cap X$, $j=1,\ldots,n$, we require that $\{ d\rho_{j_{1}},...,d\rho_{j_{k}}\}$ has maximal rank at any point of $ H_{j_{1}}\cap\ldots\cap H_{j_{k}}$ for any $1\le j_{1} < \cdots < j_{k} \le n$. 
\end{enumerate}
We assume for simplicity that all the boundary hypersurfaces $H_j$ of $X$ are connected, as well as $X$ itself. 

The so-called Puff groupoid  \cite{Mont} is then defined by:
\begin{equation}\label{Puffgrpd}
G(\tilde{X},(\rho_i))= \{(x,y,\lambda_1,...,\lambda_n)\in \tilde{X}\times \tilde{X}\times \RR^n: \rho_i(x)=e^{\lambda_i}\rho_i(y)\}.
\end{equation}
This is a Lie subgroupoid of $\tilde{X}\times \tilde{X}\times \RR^k$. 
The $b$-groupoid $\Gamma_{b}(X)$  \cite{Mont} is then defined as the $s$-connected component of the restriction of the Puff groupoid to (the saturared closed subspace) $X$.  It is again a Lie amenable groupoid (in the extended sense of \cite{LautNist}) whose Lie algebroid identifies in a canonical way with the compressed tangent bundle ${}^bTX$. Also, the vector representation (or the regular one at any interior point of $X$) of the algrebra of compactly supported pseudodifferential operators on $\Gamma_{b}(X)$  is equal to the compactly supported small $b$-calculus. This equality can be enlarged to the small calculus by  adding a natural Schwartz space of $\Gamma_{b}(X)$ \cite{LMN} to the pseudodifferential $\Gamma_{b}(X)$-calculus but the operation is not necessary for $K$-theory purposes. Indeed, denoting by  $\cK_b(X)$ the closure of $\Psi^{-\infty}_b(X)$ into the algebra of bounded operators on $L^2_b(X)$, we have a natural isomorphism
\begin{equation}
C^*(\Gamma_b(X))\cong \cK_b(X).
\end{equation}

We will now introduce the several index morphisms we will be using, mainly the Analytic and the Fredholm index.
In all this section, $X$ denotes a compact and connected manifold with embedded corners. 

\subsection{Ellipticity and Analytical Index morphisms}\label{subsectionAnalyticindexmorphism}

The analytical index morphism (of the manifold with embedded corners $X$)  takes it values in the group $K_0(\cK_b(X))$. It can be defined in two ways. First, we may consider  the connecting homomorphism $I$ of the exact sequence in $K$-theory associated with the short exact sequence of $C^*$-algebras:
\begin{equation}\label{bKses}
\xymatrix{
0\ar[r]&\cK_b(X)\ar[r]&\overline{\Psi_b^0(X)}\ar[r]^-{\sigma_b}&C(^bS^*X)\ar[r]&0.
}
\end{equation}
Then, if $[\sigma_b(D)]_1$ denotes the class in $K_1(C({}^bS^*X))$ of the principal symbol $\sigma_b(D)$ of an elliptic $b$-pseudodifferential $D$, we define the    analytical index   $\mathrm{Ind}_{\mathrm{an}}(D)$ of $D$ by 
\[
\mathrm{Ind}_{\mathrm{an}}(D)=I([\sigma_b(D)]_1)\in K_0(\cK_b(X)).
\]
Secondly, we can in a first step produce a $K_0$-class $[\sigma_b(D)]$ out of $\sigma_b(D)$: 
\begin{equation}
 [\sigma_b(D)] = \delta([\sigma_b(D)]_1)\in K_0(C_0({}^bT^*X))
\end{equation}
where $\delta$ is the connecting homomorphism of the exact sequence relating the vector and  sphere bundles:
\begin{equation}\label{bTS}
\xymatrix{
0\ar[r]& C_0({}^bT^*X)\ar[r]& C_0({}^bB^*X)\ar[r]&C({}^bS^*X)\ar[r]&0.
}
\end{equation}
Next,  we consider the exact sequence coming with the adiabatic deformation of $\Gamma_{b}(X)$:
\begin{equation}
\xymatrix{
0\ar[r]&C^*(\Gamma_b(X)\times (0,1])\ar[r]&C^*(\Gamma_b^{tan}(X))\ar[r]^-{r_0}&C^*(^bTX)\ar[r]&0,
}
\end{equation}
 in which the ideal is $K$-contractible. Using the shorthand notation $K^0_{top}(^bT^*X)$ for $K_0(C^*(^bTX))$, we set: 
\begin{equation}
Ind^a_X= r_1 \circ r_0^{-1} : K^0_{top}(^bT^*X)\longrightarrow K_0(\cK_b(X))
\end{equation}
where $r_1 : K_0(C^*(\Gamma_b^{tan}(X)))\to K_0(C^*(\Gamma_b(X))) $ is induced by the restriction morphism to $t=1$.
Applying a mapping cone argument to the exact sequence (\ref{bKses}) gives a commutative diagram
\begin{equation}
\xymatrix{
K_1(C(^bS^*X))\ar[rd]_-{\delta}\ar[rr]^-{I}&&K_0(\cK_b(X))\\
&K^0_{top}(^bT^*X)\ar[ru]_-{Ind^a_X}&
}
\end{equation}
Therefore we get, as announced:
\begin{equation}
  \mathrm{Ind}_{\mathrm{an}}(D) = Ind^a_X([\sigma_b(D)])
\end{equation}
 The map $Ind^a_X$ will be called  the {\it  Analytic Index morphism} of $X$.  A closely related homomorphism is the {\it  Boundary analytic Index morphism}, in which  the restriction to $X\times\{1\}$ is replaced  by the one to $\partial X\times\{1\}$, that is, we set:
 \begin{equation}
  Ind^\partial_X =  r_\partial \circ r_0^{-1}  : K_0(C_0(^bT^*X))\longrightarrow K_0(C^*(\Gamma_b(X)|_{\partial X})),
 \end{equation}
 where $r_\partial$ is induced by the   homomorphism  
 $C^*(\Gamma^{tan}_b(X))\longrightarrow C^*(\Gamma_b(X))|_{\partial X} $. We have of course 
 \begin{equation}
  Ind^\partial_X = r_{1,\partial}\circ Ind^a_X 
 \end{equation}
if $r_{1,\partial}$ denotes the map induced by the homomorphism  $C^*(\Gamma_b(X))\longrightarrow C^*(\Gamma_b(X)|_{\partial X})$. Since $r_{1,\partial}$ induces an isomorphism between $K_0$ groups (proposition 5.6 in \cite{CarLes}), both indices have the same meaning.

\subsection{Full ellipticity and the Fredholm Index morphism}\label{Fredsubsection}
To capture  the defect of Fredholmness of elliptic $b$-operators on $X$, we may introduce the algebra of full, or joint, symbols $\cA_\cF$ \cite{LMNpdo}.  If $F_1$ dnoytes the set of closed boundary hypersurfaces of $X$,  then the full symbol map is the $*$-homomorphism given by: 
\begin{equation}
 \sigma_F : \Psi^0(\Gamma_b(X))\ni P \longmapsto \Big( \sigma_b(P),(P\vert_H)_{H\in F_1}\Big) \in \cA_{\cF}.
\end{equation}
It gives rise to the exact sequence: 
\begin{equation}\label{Fredses}
\xymatrix{
0\ar[r]&\cK(X)\ar[r]&\overline{\Psi^0(\Gamma_b(X))}\ar[r]^-{\sigma_F}& \overline{\cA_{\cF}}\ar[r]&0
}
\end{equation}
where $\cK(X)$ is the algebra of compact operators on $L^2_b(X)$.
An operator $D\in \Psi^0(\Gamma_b(X))$ is said to be fully elliptic if $\sigma_F(D)$ is invertible.
 In \cite{Loya} (the statement also appears in \cite{MelPia}), it is proved that full ellipticity is equivalent to Fredholmness  on any $b$-Sobolev spaces $H^s_b(X)$.

For a given fully elliptic operator $D$, we denote by $\mathrm{Ind}_{\mathrm{Fred}}(D)$ its Fredholm index. We briefly recall how this integer is captured in $K$-theory. First, there is a natural isomorphism 
\begin{equation}
     K_0(\mu) \cong K_0(C^*(\mathcal{T}_{nc}X))
\end{equation}
between the $K$-theory of the obvious homomorphism $C(X)\To \overline{\cA_{\cF}}$ and the $K$-theory of the noncommutative tangent space $\mathcal{T}_{nc}X$.  The former $K$-group captures stable homotopy classes of fully elliptic operators and the latter, which comes from deformation groupoid techniques,   classifies the noncommutative symbols $\sigma_{nc}(D)$ of  fully elliptic operators $D$.  

    Next, the same deformation  techniques give rise to a homomorphism:
 \begin{equation}\label{Fredmorph}
Ind^X_F : K^0(T_{nc}X)\longrightarrow
K_0(\cK(X))\simeq \mathbb{Z},
\end{equation}
which satisfies:
\begin{equation}
Ind_F^X([\sigma_{nc}(D)])= \mathrm{Ind}_{\mathrm{Fred}}(D),
\end{equation}
for any fully elliptic operator $D$. 

\subsection{Obstruction to full ellipticity and Fredholm perturbation property }\label{Obstructionsection}
 
 In order to analyse the obstruction to full ellipticity, we introduce Fredholm Perturbation Properties \cite{NisGauge}. 
\begin{definition} Let $D\in \Psi_b^m(X)$ be elliptic. We say that $D$ satisfies: 
\begin{itemize}
\item  the {\it Fredholm Perturbation Property} $(\cF\cP)$ if there is   $R\in \Psi_b^{-\infty}(X)$ such that $D+R$ is  fully elliptic. 
\item the  {\it stably  Fredholm Perturbation Property} $(\cS\cF\cP)$ if $D\oplus 1_H$ satisfies $(\cF\cP)$ for some identity operator $1_H$.
\item the {\it stably homotopic Fredholm Perturbation Property} $(\cH\cF\cP)$ if there is a fully elliptic operator $D'$ with $[\sigma_b(D')]=[\sigma_b(D)]\in K_0(C^*({}^bTX))$.
\end{itemize}
\end{definition}
We also say that $X$ satisfies the   {\it (resp. stably) Fredholm Perturbation Property}  if any elliptic $b$-operator on $X$ satisfies $(\cF\cP)$ (resp. $(\cS\cF\cP)$).

Property  $(\cF\cP)$ is  stronger than property $(\cS\cF\cP)$ which in turn is equivalent to property $(\cH\cF\cP)$ by  \cite[Proposition 4.3]{DebSkJGP}.  In  \cite{NSS2}, Nazaikinskii, Savin and Sternin characterized $(\cH\cF\cP)$
 for arbitrary manifolds with corners using an index map associated with their dual manifold construction. In \cite{CarLes} the result of \cite{NSS2} is rephrased in terms of deformation groupoids with the non trivial extra apport of changing $(\cH\cF\cP)$ by $(\cS\cF\cP)$ thanks to   \cite[Proposition 4.3]{DebSkJGP}: 
  
\begin{theorem}\label{AnavsFredthm1}
Let $D$ be an elliptic $b$-pseudodifferential operator on a compact manifold with corners $X$. Then  $D$ satisfies $(\cS\cF\cP)$ if and only if 
$
 Ind_X^\partial([\sigma_b(D)])=0$ in  $K_0(C^*(\Gamma_b(X)|_{\partial X}))$.  \\ In particular, if $D$ satisfies $(\cF\cP)$ then its boundary analytic  index vanishes. 
\end{theorem}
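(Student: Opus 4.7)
My plan is to split the proof into two pieces: first a general reduction from $(\cS\cF\cP)$ to $(\cH\cF\cP)$, and then a $K$-theoretic exactness argument identifying $(\cH\cF\cP)$ with the vanishing of the boundary analytic index. For the reduction, I would simply invoke Proposition 4.3 of \cite{DebSkJGP}, which gives $(\cS\cF\cP)\iff(\cH\cF\cP)$ for any elliptic $b$-operator, thus reducing the theorem to a statement entirely about principal symbol classes in $K^0_{top}({}^bT^*X)$. The ``in particular'' clause is then immediate, since $(\cF\cP)$ trivially implies $(\cS\cF\cP)$.

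For the main equivalence $(\cH\cF\cP) \iff Ind^\partial_X([\sigma_b(D)])=0$, the plan is to exhibit a six-term exact sequence of the form
\[
K_0(C^*(\mathcal{T}_{nc}X)) \stackrel{\pi_*}{\longrightarrow} K^0_{top}({}^bT^*X) \stackrel{Ind^\partial_X}{\longrightarrow} K_0(C^*(\Gamma_b(X)|_{\partial X})),
\]
in which $\pi_*$ is the $K$-theory map sending a noncommutative symbol class $[\sigma_{nc}(D')]$ to the interior principal symbol class $[\sigma_b(D')]$. Granting exactness, $(\cH\cF\cP)$ means precisely that $[\sigma_b(D)]$ lies in the image of $\pi_*$, which is then equivalent to the vanishing of $Ind^\partial_X([\sigma_b(D)])$.

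To produce this sequence I would combine the adiabatic deformation groupoid $\Gamma_b^{tan}(X)$ from Section \ref{subsectionAnalyticindexmorphism}, whose $C^*$-algebra yields the evaluation maps $r_0$ and $r_\partial$ defining $Ind^\partial_X = r_\partial \circ r_0^{-1}$, with the noncommutative tangent space $\mathcal{T}_{nc}X$ from Section \ref{Fredsubsection}. The latter is designed so that $K_0(C^*(\mathcal{T}_{nc}X))$ classifies noncommutative symbols of fully elliptic operators, and the $K$-contractibility of $C^*(\Gamma_b(X)\times(0,1])$ turns the restriction $r_0^{-1}$ into a bona fide $K$-theory map. Chasing the associated mapping cone/six-term diagram identifies the boundary-direction connecting map with $r_\partial\circ r_0^{-1} = Ind^\partial_X$ and the interior-direction map with $\pi_*$.

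The main obstacle I expect is exactly this last identification: verifying that the abstract connecting map of the deformation sequence coincides on the nose with $Ind^\partial_X$, and that the map $\pi_*$ is indeed the interior principal symbol. This is essentially the deformation-groupoid reformulation carried out in \cite{CarLes,CLM} of the original dual-manifold obstruction of Nazaikinskii--Savin--Sternin \cite{NSS2}; modulo this identification, the theorem follows by direct exactness together with the reduction step.
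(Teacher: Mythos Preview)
Your proposal is correct and matches the paper's own treatment: the paper does not actually prove this theorem in-text but attributes it to \cite{NSS2} (for the characterisation of $(\cH\cF\cP)$ via the boundary index), to \cite{CarLes} for the deformation-groupoid reformulation, and to \cite[Proposition 4.3]{DebSkJGP} for the equivalence $(\cS\cF\cP)\iff(\cH\cF\cP)$. Your two-step plan --- the reduction via DebSkJGP followed by the six-term exactness argument identifying the image of $\pi_*$ with the kernel of $Ind^\partial_X$ --- is precisely the strategy the paper alludes to in the paragraph preceding the theorem.
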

This motivates the computation of $K_0(C^*(\Gamma_b(X)))\cong K_0(C^*(\Gamma_b(X)|_{\partial X}))$.


\section{A topological space $K$-equivalent to  $\Gamma_b(X)$}\label{secOX}

The subject of this section is to construct an explicit space $O_X$ and an explicit Connes-Thom isomorphism 
\begin{equation}
CT:K_*(C^*(\Gamma_b(X)))\longrightarrow K^*_{top}(O_X).
\end{equation}
This will be done by replacing $\Gamma_b(X)$ by an action groupoid which has the same $K$-theory and moreover which is free and proper, and then equivalent to its space of orbits.   The general idea comes from \cite{Concg}, the case of manifolds with boundary is treated in  \cite{CLM} and all the material here comes directly from  \cite{CLM}.

\subsection{The orbit space $O_X$}\label{OXsubsection1}

Consider an embedding 
$$\iota :\widetilde{X}\hookrightarrow  \mathbb{R}^{N-n}$$
with $N$ even and $n$ still denoting the number of boundary hypersurfaces of $X$. Consider the groupoid morphism
\begin{equation}\label{hmorphism}
\xymatrix{
h:\widetilde{X}\times \widetilde{X}\times\mathbb{R}^n\to \mathbb{R}^{N-n}\times \mathbb{R}^n=\mathbb{R}^N
}
\end{equation}
given by 
$$h(x,y,(\lambda_i)_i)=(\iota(x)-\iota(y),(\lambda_i)_i).$$
 
The morphism $h$ induces a semi-direct product groupoid
\begin{equation}
(\widetilde{X}\times \widetilde{X}\times\mathbb{R}^n) \rtimes \mathbb{R}^N\rightrightarrows \widetilde{X}\times \mathbb{R}^N.
\end{equation} 
A very simple and direct computation gives that this groupoid is free (that is, has trivial isotropy subgroups) and proper, the freeness commes from the fact $h$ is a monomorphism of Lie groupoids (if $h(\gamma)$ is a unit then $\gamma$ itself is a unit), and the properness of the map
$$(\widetilde{X}\times \widetilde{X}\times\mathbb{R}^n) \rtimes \mathbb{R}^N\stackrel{(t,s)}{\longrightarrow}(\widetilde{X}\times\mathbb{R}^N)^2$$ 
can be verified by a direct computation.

Now, as shown by Tu in \cite{Tu04} proposition 2.10, a topological groupoid $G\rightrightarrows Z$ is proper iff the asscoiated map $(t,s)$ is closed and the stabilizers are quasi-compact. In particular, since $\Gamma_b(X)$ is a closed subgroupoid of $\widetilde{X}\times \widetilde{X}\times\mathbb{R}^n$ and since the induced groupoid morphism
\begin{equation}
\Gamma_b(X)\stackrel{h}{\longrightarrow} \mathbb{R}^N
\end{equation} 
is a groupoid monomorphism we obtain that: 
 
\begin{proposition}
The semi-direct product groupoid 
\begin{equation}
\Gamma_b(X)\rtimes\mathbb{R}^N \rightrightarrows X\times \mathbb{R}^N
\end{equation}
is free and proper.
\end{proposition}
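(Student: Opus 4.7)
The strategy is essentially laid out in the paragraph preceding the proposition: deduce the two properties first at the level of the ambient action groupoid $(\widetilde{X}\times\widetilde{X}\times\mathbb{R}^n)\rtimes\mathbb{R}^N$ and then restrict to the closed subgroupoid $\Gamma_b(X)\rtimes\mathbb{R}^N$. I would organize the argument in two steps.

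\textbf{Freeness.} First I would unwind the structure of a semi-direct product arising from a groupoid morphism $h:G\to\mathbb{R}^N$ (with $\mathbb{R}^N$ viewed as a group). Arrows in $G\rtimes\mathbb{R}^N$ are pairs $(\gamma,v)\in G\times\mathbb{R}^N$ with source $(s_G(\gamma),v)$ and target $(t_G(\gamma),v+h(\gamma))$. An isotropy element at $(x,v)$ is then a pair $(\gamma,v)$ with $s_G(\gamma)=t_G(\gamma)=x$ and $h(\gamma)=0$. Consequently freeness of $G\rtimes\mathbb{R}^N$ is equivalent to the property that $h(\gamma)=0$ forces $\gamma$ to be a unit of $G$. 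For $G=\widetilde{X}\times\widetilde{X}\times\mathbb{R}^n$ and $h(x,y,\lambda)=(\iota(x)-\iota(y),\lambda)$, the condition $h(x,y,\lambda)=0$ yields $\iota(x)=\iota(y)$ and $\lambda=0$; since $\iota$ is an embedding, $x=y$ and $(x,y,\lambda)=(x,x,0)$ is a unit. The same conclusion then holds tautologically for the subgroupoid $\Gamma_b(X)\subset(\widetilde{X}\times\widetilde{X}\times\mathbb{R}^n)|_X$, so $\Gamma_b(X)\rtimes\mathbb{R}^N$ has trivial isotropy.

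\textbf{Properness.} By Tu's criterion (\cite{Tu04}, prop.~2.10) invoked in the excerpt, for a Hausdorff topological groupoid it suffices to check that $(t,s)$ is closed and that the stabilizers are quasi-compact. The freeness from the previous step handles the stabilizers, so only closedness remains. I would establish it by showing that the ambient map
\[
(\widetilde{X}\times\widetilde{X}\times\mathbb{R}^n)\rtimes\mathbb{R}^N \stackrel{(t,s)}{\longrightarrow}(\widetilde{X}\times\mathbb{R}^N)^2,\qquad
((x,y,\lambda),v)\longmapsto\bigl((y,v+h(x,y,\lambda)),(x,v)\bigr),
\]
is proper. A direct computation does it: from a point in the image one recovers $x$, $v$, and $y$ by projection, then reads off $\iota(x)-\iota(y)$ and $\lambda$ from the second component minus $v$, so the preimage of a compact set is bounded in every coordinate and hence relatively compact (using that $\widetilde{X}$ is compact and $\iota$ is a topological embedding). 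A proper map is closed, and its restriction to the closed subspace $\Gamma_b(X)\rtimes\mathbb{R}^N\subset(\widetilde{X}\times\widetilde{X}\times\mathbb{R}^n)|_X\rtimes\mathbb{R}^N$, with target restricted to $(X\times\mathbb{R}^N)^2$, remains closed. Combining with trivial isotropy, $\Gamma_b(X)\rtimes\mathbb{R}^N$ is proper.

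\textbf{Main obstacle.} The two steps themselves are essentially formal once one has the right setup; the only point that requires care is that $\Gamma_b(X)$ really is closed inside the ambient Lie groupoid restricted to $X$, so that the restriction of a closed map is closed. This rests on the construction of the $b$-groupoid as an $s$-connected component of the restriction of the puff groupoid, which is a standard fact recalled in the previous subsection; I would simply cite it rather than reprove it. Everything else is bookkeeping around the tautology \emph{$h$ monomorphism $\Longleftrightarrow$ $G\rtimes\mathbb{R}^N$ free} and the elementary properness of the ambient $(t,s)$ map.
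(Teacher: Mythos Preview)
Your proposal is correct and follows essentially the same route as the paper: freeness from $h$ being a groupoid monomorphism, properness of the ambient $(t,s)$ map by a direct coordinate computation, and then the passage to $\Gamma_b(X)\rtimes\mathbb{R}^N$ via Tu's closed-map-plus-quasi-compact-stabilizers criterion, using that $\Gamma_b(X)$ sits as a closed subgroupoid. You have simply spelled out in more detail what the paper leaves as ``a very simple and direct computation''; there is no substantive difference in strategy.
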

By  \cite{Tu04} (section 2), the space of orbits $X\times \mathbb{R}^N/\Gamma_b(X)\rtimes\mathbb{R}^N$ is then Hausdorff and locally compact. We let:

\begin{definition}[The orbit space]
We denote by
\begin{equation}
O_X:=Orb(\Gamma_b(X)\rtimes\mathbb{R}^N)
\end{equation}
the Orbit space associated with the groupoid $\Gamma_b(X)\rtimes\mathbb{R}^N \rightrightarrows X\times \mathbb{R}^N$. 
\end{definition}
By  classic groupoid's results recalled for instance in \cite{CLM} section 2, we have the following
\begin{proposition}\label{CTOX}
There is an isomorphism
\begin{equation}
CT_h:K_*(C^*(\Gamma_b(X)))\longrightarrow K^*_{top}(O_X)
\end{equation}
given by the composition of the Connes-Thom isomorphism
$$CT:K_*(C^*(\Gamma_b(X)))\longrightarrow K_*(C^*(\Gamma_b(X)\rtimes\mathbb{R}^N))$$
and the isomorphism 
$$\mu:K_*(C^*(\Gamma_b(X)\rtimes\mathbb{R}^N))\longrightarrow	K^*_{top}(O_X)$$
induced from the groupoid Morita equivalence between $\Gamma_b(X)\rtimes\mathbb{R}^N$ and $O_X$ (seen as a trivial unit groupoid).
\end{proposition}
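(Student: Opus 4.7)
The strategy is to realize $CT_h$ as a composition of two well-known isomorphisms, each of which applies in our setting thanks to the structural properties established earlier.

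First I would invoke the groupoid version of the Connes-Thom isomorphism. The morphism $h$ in \eqref{hmorphism}, restricted to $\Gamma_b(X)$, is a continuous groupoid homomorphism $\Gamma_b(X)\to \mathbb{R}^N$; equivalently, a $1$-cocycle valued in $\mathbb{R}^N$. The induced semi-direct product groupoid $\Gamma_b(X)\rtimes \mathbb{R}^N$ has the property that
$$C^*(\Gamma_b(X)\rtimes \mathbb{R}^N)\cong C^*(\Gamma_b(X))\rtimes_\alpha \mathbb{R}^N,$$
where $\alpha$ is the action of $\mathbb{R}^N$ on $C^*(\Gamma_b(X))$ induced by $h$ (this identification is standard, and uses the amenability of $\Gamma_b(X)$ noted earlier to identify reduced and full C*-algebras freely). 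The usual Connes-Thom isomorphism for $\mathbb{R}^N$-actions on $C^*$-algebras then gives
$$CT:K_*(C^*(\Gamma_b(X)))\longrightarrow K_*(C^*(\Gamma_b(X)\rtimes \mathbb{R}^N)),$$
with no degree shift since $N$ was chosen even.

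Next I would convert the right-hand side into topological K-theory of the orbit space via a Morita equivalence. By the preceding proposition, $\Gamma_b(X)\rtimes \mathbb{R}^N$ is free and proper. By a classical result (Muhly-Renault-Williams, and Tu \cite{Tu04}), a free and proper Lie groupoid is Morita equivalent to the unit groupoid on its orbit space, here $O_X$. Morita equivalence of (amenable) Lie groupoids induces a strong Morita equivalence of their C*-algebras and hence a K-theory isomorphism
$$\mu:K_*(C^*(\Gamma_b(X)\rtimes \mathbb{R}^N))\longrightarrow K_*(C_0(O_X))=K^*_{top}(O_X).$$
Setting $CT_h := \mu\circ CT$ produces the claimed isomorphism.

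The main obstacle is not conceptual but bookkeeping: one must carefully identify $C^*(\Gamma_b(X)\rtimes \mathbb{R}^N)$ with the crossed product algebra $C^*(\Gamma_b(X))\rtimes_\alpha \mathbb{R}^N$ in a way compatible with the Connes-Thom element, and one must ensure that the Morita equivalence is implemented by a linking groupoid whose associated imprimitivity bimodule realizes $\mu$ on K-theory. Both points are standard and treated in \cite{CLM} Section 2, to which the proof essentially reduces; the real content of the proposition has already been delivered by the freeness-and-properness statement that preceded it.
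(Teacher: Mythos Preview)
Your proposal is correct and matches the paper's approach exactly: the paper itself offers no argument beyond the sentence ``By classic groupoid's results recalled for instance in \cite{CLM} section 2,'' and your write-up simply unpacks what those classical results are (Connes--Thom for $\mathbb{R}^N$-actions, and Morita equivalence of a free proper groupoid with its orbit space). Your closing remark that the substantive content lies in the preceding freeness-and-properness proposition is precisely the point.
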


\subsection{The orbit space $O_X$ as a manifold with corners}

Later on the paper we will need to apply the Chern character morphism to the topological space $O_X$ (to its topological K-theory) and for this we will justify in this section that this space has indeed the homotopy type of a CW-space, in fact this space inherits from $X\times \mathbb{R}^N$ a manifold with corners structure as we will now explain.

Consider the $s$-connected Puff groupoid, recalled in (\ref{Puffgrpd}), 
\begin{equation}
G_c(\tilde{X},(\rho_i))\rightrightarrows \tilde{X}.
\end{equation}
It is a Lie groupoid and the semi-direct product groupoid
\begin{equation}
G_c(\tilde{X},(\rho_i))\rtimes_h\mathbb{R}^N\rightrightarrows \tilde{X}\times\mathbb{R}^N
\end{equation}
induced by the morphism $h$ defined in (\ref{hmorphism}) is a free proper Lie groupoid by exactly the same arguments applied to $\Gamma_b(X)$ in section \ref{OXsubsection1}. By classic results on Lie groupoid theory, the orbit space
\begin{equation}
O_{\tilde{X}}:=Orb(G_c(\tilde{X},(\rho_i))\rtimes_h\mathbb{R}^N)
\end{equation}
inherits from $\tilde{X}\times \mathbb{R}^N$ a structure of a $C^\infty$-manifold. A good reference for this is the nice extended survey of Crainic and Mestre, \cite{CrMes}, that clarifies and explains very interesting results on Lie groupid theory that were confusing in the litterature, in particular they explain the role of the linearization theorem for proper Lie groupoids (theorem 2 in ref.cit.) on the local structure of such groupoids and on their orbit spaces.

We will now give the defining functions on $O_{\tilde{X}}$ whose positive parts will define $O_X$. For this, denote, as in sections above, a vector $v=(v',v'')\in \mathbb{R}^{N-n}\times \mathbb{R}^n$. A simple and direct computation shows that, for $i=1,...n$, the $C^\infty$-map
\begin{equation}
(x,v)\mapsto \rho_i(x)e^{v''_i}
\end{equation}
induces a well defined $C^\infty$-map
\begin{equation}
\tilde{\rho}_i:O_{\tilde{X}}\to \mathbb{R}.
\end{equation}
Using the map (\ref{Qdefinition}) and the induced homeomorphisms on the faces (\ref{eq:ident-f}) one can get that
\begin{equation}
O_X=\bigcap_{i=1,...,n}\{\tilde{\rho}_i\geq 0\}.
\end{equation}

Finally, a simple computation yields
\begin{equation}
d_{(x,A)}R_i(W,V)=e^{A''_i}d_x\rho_i(W)+e^{A''_i}V''_i\rho_i(x)
\end{equation}
where $R_i=\tilde{\rho}_i\circ p$ (with $p:\tilde{X}\times \mathbb{R}^N\to O_{\tilde{X}}$ the quotient map), $(W,V)\in T_x\tilde{X}\times T_A\mathbb{R}^N$, and since $p$ is a submersion we obtain that $\{ d\tilde{\rho}_{j_{1}},...,d\tilde{\rho}_{j_{k}} \}$ has maximal rank at any point of $ \tilde{H}_{j_1}\cap\ldots\cap \tilde{H}_{j_{k}}$ for any $1\le j_{1} < \cdots < j_{k} \le n$, where $H_j = \tilde{\rho}_j^{-1}( \{0\})\cap O_X$.

In conclusion we obtain that $O_X$ is a manifold with embedded corners defined by the defining fuctions $\tilde{\rho}_1,...,\tilde{\rho}_n$, the set of its faces of a given codimension is in bijection with the set of faces of $X$ the corresponding codimension. As proved above, each face is homeomorphic to an euclidien space.
 
Now, it is a classic fact, for example see corollary 1 in \cite{MilCW}, that any topological separable manifold as $O_X$ has the homotopy type of a countable $CW$-complex. This is all we will need in the following sections. 

\subsection{The filtration of $O_X$}

The space $O_X$ is a quotient space $X\times\mathbb{R}^N/\sim$ where the relation is given as follows  $(x,A)\sim (y,B)$ iff there is $\gamma=((x,y),(\lambda_i)_i)\in \Gamma_b(X)$ with $B=h(\gamma)+A$. We denote by $\pi : X\times\mathbb{R}^N \To O_{X}$ the quotient map. This map is open \cite[Prop. 2.11]{Tu04}. 

The space  $X$ is naturally filtrated. Indeed, denote by $F_p$ the set of connected faces of codimension $p$ (and $d$ the codimension of $X$).  For a given face $f\in F_p$,  we define the index set $I(f)$ of $f$ to be the unique tuple  $(i_1,\ldots,i_p)$  such that $1\le i_1 <\ldots < i_p \le n$ and 
\begin{equation}
  f \subset H_{i_1}\cap\ldots\cap H_{i_p}
\end{equation}
where we recall that $H_j = \rho_j^{-1}(\{0\})\subset X$.  The filtration of $X$ is then given by:
\begin{equation}\label{filtration-by-codimension}
  X_{j} = \bigcup_{\stackrel{f\in F}{ \  d-j \le \mathrm{codim}(f)\le d}} f
\end{equation}
Then: 
\begin{equation}
 F_d = X_{0} \subset X_{1} \subset \cdots \subset X_d = X.
\end{equation}
and setting $Y_{p}=\pi(X_{p}\times \RR^{N})$, we get a filtration of $O_X$:
\begin{equation}\label{filtrationOX}
Y_0\subset	Y_1\subset \cdots \subset Y_{d-1}\subset Y_d=O_X
\end{equation}
For any index set $I$ we let:  
\begin{equation}
\mathbb{R}_I^{N}:=\{(y,x)\in \RR^{N-n}\times \RR_{+}^n \ ; \   x_i=0 \text{ if  } i\in I \text{ and  }x_i > 0 \text{ otherwise }\} \subset \RR^{N}
\end{equation} 
and we write $\mathbb{R}_f^{N}$ instead of $\mathbb{R}_{I(f)}^{N}$. We are going to define a  map 
$$Q : X \times \RR^{N}\To \RR^{N-n}\times \RR_{+}^n,$$
 smooth and compatible with the equivalence relation on $X \times \RR^{N}$, whose quotient map $q : O_{X}\To  \RR^{N-n}\times \RR_{+}^n$ induces homeomorphisms: 
\begin{equation}\label{eq:ident-f}
  \pi(f\times \RR^{N})   \simeq \RR_{f}^{N}
\end{equation}
for any face $f$ and 
\begin{equation}\label{eq:ident-f-g}
 \pi((f\cup g)\times \RR^{N})  \simeq \RR_{(f,g)}^{N} := \RR_{f}^{N}\ \cup \ \RR_{g}^{N}
\end{equation}
for any pair $(f,g)\in F_{p}\times F_{p-1}$ such that $f\subset \overline{g}$. 

For that purpose, we write for convenience $e^{A}$ for $(e^{A_{1}},\ldots, e^{A_{k}})$ and $\rho.v$ for $(\rho_{1}v_{1},\ldots,\rho_{k}v_{k})$ for all $k$ and $A,\rho,v\in\RR^{k}$. Also, we use the notation $v =(v',v'')\in R^{N-n}\times\RR^{n}$ for any $v\in\RR^{N} $. We then define 
\begin{equation}\label{Qdefinition}
  x\in X, \ v\in \RR^{N},\quad  Q(x,v) = (\iota(x)+v', \rho(x).e^{v''}).
\end{equation}
It is easy to check that $Q : X\times \RR^{N}\To \RR^{N-n}\times \RR_{+}^n$ is a surjective submersion, compatible with the equivalence relation. We denote by $q : O_{X}\To \RR^{N-n}\times \RR_{+}^n$ the quotient map.
For any $f\in F_{*}$, one can check that 
\begin{equation}
  f\times \RR^{N} = Q^{-1}(\RR_{f}^{N}) \text{ and } \forall x,y\in f, v,w\in\RR^{N},\  Q(x,v)=Q(y,w) \iff (x,v)\sim (y,w). 
\end{equation}
It follows that $q_{|_{f}}$ and $q_{|_{f\cup g}}$ provide the homeomorphisms \eqref{eq:ident-f} and \eqref{eq:ident-f-g}. We have proved: 
\begin{proposition}\label{OXTop2}
For any pair $(f,g)\in F_{d-q}\times F_{d-q-1}$ such that $f\subset \overline{g}$, we have a commutative diagram:
\begin{equation}
\xymatrix{
(Y_q\setminus Y_{q-1})_f\ar[d]^-{\approx}_-{q_{|_{f}}}\ar[r]&(Y_{q+1}\setminus Y_{q-1})_{f\cup g}\ar[d]_-{\approx}^-{q_{|_{f\cup g}}}\\
\mathbb{R}_f^{N}\ar[r]&\mathbb{R}_{(f,g)}^{N}
}
\end{equation}
where the vertical maps are homeomorphisms, the horizontal maps are the inclusions and:
\begin{equation}
(Y_q\setminus Y_{q-1})_f := \pi(f\times \mathbb{R}^N)\quad ; \quad (Y_{q+1}\setminus Y_{q-1})_{f\cup g}:=q((f\cup g)\times \mathbb{R}^N)
\end{equation}
\end{proposition}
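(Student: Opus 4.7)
The key observation is that everything in the proposition can be traced back to the map $Q$ defined in \eqref{Qdefinition}. My plan is to verify that $Q$ descends to a continuous map $q: O_X \to \RR^{N-n}\times \RR_+^n$, that this $q$ restricts to homeomorphisms on each stratum $\pi(f\times\RR^N)$ and $\pi((f\cup g)\times\RR^N)$ onto $\RR_f^N$ and $\RR_{(f,g)}^N$ respectively, and that commutativity of the stated diagram is then automatic because both vertical arrows are restrictions of the same map $q$ and both horizontal arrows are inclusions.

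First I would check that $Q$ is compatible with $\sim$: if $(x,v)\sim(y,w)$ via some $\gamma=(x,y,\lambda)\in \Gamma_b(X)$, then $\rho_i(x)=e^{\lambda_i}\rho_i(y)$, $w'=\iota(x)-\iota(y)+v'$, and $w''=\lambda+v''$, so that
$$\iota(y)+w'=\iota(x)+v'\quad\text{and}\quad \rho_i(y)e^{w''_i}=\rho_i(x)e^{v''_i},$$
hence $Q(y,w)=Q(x,v)$ and $Q$ factors through $\pi$ as a continuous map $q$. Next, since $x\in X$ lies in the open face $f$ if and only if $\rho_i(x)=0\iff i\in I(f)$, and since $e^{v''_i}>0$, the identities $Q^{-1}(\RR_f^N)=f\times \RR^N$ and $Q^{-1}(\RR_{(f,g)}^N)=(f\cup g)\times\RR^N$ follow.

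It remains to show that $q$ restricts to a bijection on each stratum. For surjectivity of $Q|_{f\times\RR^N}$ onto $\RR_f^N$, pick $x_0\in f$ and for $(y,x)\in\RR_f^N$ set $v'=y-\iota(x_0)$ and $v''_i=\ln(x_i/\rho_i(x_0))$ for $i\notin I(f)$ (arbitrary otherwise); then $Q(x_0,v)=(y,x)$. For injectivity modulo $\sim$, assume $(x,v),(y,w)\in f\times\RR^N$ satisfy $Q(x,v)=Q(y,w)$ and set $\lambda_i=w''_i-v''_i$; then $\rho_i(x)=e^{\lambda_i}\rho_i(y)$ for every $i$ (trivially for $i\in I(f)$, since both sides vanish), so $(x,y,\lambda)$ belongs to the Puff groupoid. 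To conclude $(x,y,\lambda)\in \Gamma_b(X)$, I would connect it to the unit $(y,y,0)$ by a path $(x_t,y,\lambda_t)$ with $x_t$ a path in the connected open face $f$ from $y$ to $x$, $\lambda_{t,i}=\ln(\rho_i(x_t)/\rho_i(y))$ for $i\notin I(f)$, and $\lambda_{t,i}=t\lambda_i$ for $i\in I(f)$. The same argument applies to the pair $(f,g)$. Since $\pi$ is open by \cite[Prop.~2.11]{Tu04} and $Q$ is a surjective submersion, these continuous bijections are open and hence homeomorphisms, and commutativity of the diagram is immediate. The main obstacle I expect is the path-construction step ensuring $(x,y,\lambda)$ belongs to the $s$-connected component $\Gamma_b(X)$: one must verify that the Puff condition $\rho_i(x_t)=e^{\lambda_{t,i}}\rho_i(y)$ holds for \emph{every} $i$ along the whole path, which works precisely because $x_t$ stays inside $f$ and thus $\rho_i(x_t)$ vanishes identically whenever $i\in I(f)$.
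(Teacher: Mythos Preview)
Your proof is correct and follows precisely the same approach as the paper: define $Q$, check compatibility with $\sim$, identify $Q^{-1}(\RR_f^N)=f\times\RR^N$, and verify that $Q(x,v)=Q(y,w)\iff (x,v)\sim(y,w)$ on each face to obtain the homeomorphisms. In fact you supply more detail than the paper, which merely asserts these checks; in particular your explicit path argument confirming that $(x,y,\lambda)$ lies in the $s$-connected component $\Gamma_b(X)$ addresses a point the paper leaves implicit.
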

This will be used to compute the singular cohomology groups of $O_X$, which requires the understanding of  the inclusions $Y_q\setminus Y_{q-1}\hookrightarrow Y_{q+1}\setminus Y_{q-1}$ and more specifically how they  look like around a given face $f\in F_{d-q}$ with respect to a given $g\in F_{d-q-1}$ with $f\subset \overline{g}$.

\subsection{Cohomology of the orbit space}
We can expect the same difficulties  in computing  $K^*(O_X)$ as the ones   encountered in computing $K_*(C^*(\Gamma_b(X))$ (using the  spectral sequence associated with the corresponding filtrations). Instead,  the spectral sequence argument becomes  simpler for  the (singular) cohomology of $O_X$ with compact support. In this section we follow the notations used in \cite[Sec. 2.2]{mccleary}

The spectral sequence associated to the filtration considered in the last subsection will allow to give a very explicit cohomological computation because of proposition \ref{OXTop2}.

Explicitly we associate a cohomological  spectral sequence to the filtration \ref{filtrationOX}. It can be done by considering the exact couple\begin{equation}\label{exactcouple}\cA=\bigoplus_{p,n}H^n(Y_p),  \mathcal{E}=\bigoplus_{p,n}H^n(Y_p,Y_{p-1}),\end{equation}with the usual maps $i_{p,n}:H^n(Y_{p+1})\to H^n(Y_p)$, $j_{p,n}:H^n(Y_p)\to H^{n+1}(Y_{p+1},Y_p)$ and $k_{p,n}:H^{n+1}(Y_{p+1},Y_p)\to H^{n+1}(Y_{p+1})$.

Denote by $(E_r^{p,q}(X), d_r^{p,q})$ the associated spectral sequence to the exact couple (\ref{exactcouple}), this spectral sequence converges to 
\begin{equation}\label{gradedconvergence}
E^\infty_{p,q}(X) \cong F_{p-1}(H^{p+q}(O_X))/F_p(H^{p+q}(O_X)),
\end{equation}
where $F_p(H^{p+q}(O_X))=\ker(H^{p+q}(O_X)\xrightarrow{i^*}H^{p+q}(Y_{p}))$ and $i:Y_p\to O_X$ is the inclusion, see for instance theorem 2.6 in \cite{mccleary}. We have the following result.

\begin{proposition}
The spectral sequence $(E_r^{p,q}(O_X), d_r^{p,q})$ collapses at the page two and moreover, for $r=0,...,d$
\begin{equation}\label{graded}
H^{N-r}(O_X) \cong \bigoplus_{p+q=N-r} E_2^{p,q}(O_X).
\end{equation} 
\end{proposition}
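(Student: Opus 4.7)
My plan is to use Proposition \ref{OXTop2} as the key input: it identifies each stratum $Y_p\setminus Y_{p-1}$ as a disjoint union, indexed by faces $f\in F_{d-p}$, of pieces homeomorphic to $\mathbb{R}_f^N$. Since $f\in F_{d-p}$ satisfies $|I(f)|=d-p$, each such piece is in turn homeomorphic to $\mathbb{R}^{N-n}\times \mathbb{R}_{>0}^{n-(d-p)}\cong \mathbb{R}^{N-d+p}$. The whole argument is then driven by a concentration phenomenon: all the cohomology of the successive quotients $Y_p/Y_{p-1}$ sits in one fixed cohomological degree, forcing the spectral sequence to collapse for bidegree reasons.

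Concretely, I would first compute, via the good-pair identification coming from the manifold-with-corners structure on $O_X$ constructed in the previous subsection, that
\[
E_1^{p,q} \ =\ H^{p+q}(Y_p,Y_{p-1}) \ \cong\ \widetilde H^{p+q}\Big(\bigvee_{f\in F_{d-p}} S^{N-d+p}\Big) \ \cong\ \bigoplus_{f\in F_{d-p}} \widetilde H^{p+q}(S^{N-d+p}),
\]
using that $Y_p/Y_{p-1}$ is obtained by compactifying each of the disjoint Euclidean strata by the collapsed point. This group vanishes unless $p+q=N-d+p$, i.e.\ unless $q=N-d$, so $E_1^{*,*}$ is concentrated on this single row. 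The differential $d_r$ has bidegree $(r,1-r)$, so for $r\geq 2$ it necessarily leaves the row $q=N-d$; hence $d_r=0$ and $E_2=E_\infty$, which is the claimed collapse.

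For the second assertion, the filtration is finite so convergence is automatic: (\ref{gradedconvergence}) expresses $H^{N-r}(O_X)$ as an iterated extension of the groups $E_\infty^{p,N-r-p}$ for $p=0,\ldots,d$. Since at most one such group is nonzero (the one with $p=d-r$, when $0\le r\le d$), all but one of the successive quotients vanish, all extensions split trivially, and
\[
H^{N-r}(O_X) \ \cong\ E_2^{d-r,\, N-d} \ =\ \bigoplus_{p+q=N-r} E_2^{p,q}(O_X),
\]
as stated. The only point requiring care is the identification of $Y_p/Y_{p-1}$ with a wedge of spheres, which relies on $(Y_p,Y_{p-1})$ being a good pair and on each Euclidean stratum admitting a one-point compactification inside $Y_p/Y_{p-1}$; both follow from the local structure of $O_X$ near each face, which was precisely the purpose of the preceding subsection, so I do not expect a serious obstacle.
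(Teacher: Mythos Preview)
Your argument is correct and follows essentially the same approach as the paper's proof: both compute the $E_1$ page via Proposition~\ref{OXTop2}, obtain concentration in the single row $q=N-d$, and deduce collapse at $E_2$. For the identification~\eqref{graded}, the paper proceeds by analysing the long exact sequences of the pairs $(Y_p,Y_{p-1})$ to determine the filtration pieces $F_pH^{N-r}(O_X)$ explicitly, whereas you argue directly that at most one $E_\infty$ term per antidiagonal is nonzero so the extension problem is trivial; these are equivalent and your version is slightly more economical.
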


\begin{proof}
	From the exact couple we know that $$E_1^{p,q}(X)=H^{p+q}(Y_p,Y_{p-1})$$and a simple application of the long exact sequence axiom and of proposition \ref{OXTop2} gives
	$$E_1^{p,q}(X)=\begin{cases}
		\mathbb{Z}^{|F_{d-p}|}&\text{ if }q=N\\
		0&\text{ if }q\neq N
	\end{cases}$$
	On the other hand the first differential is defined as 
\begin{equation}\label{OXdiff1}
d_1^{p,q}=j_{p,n}\circ k_{p,p+q-1}:H^{p+q}(Y_p,Y_{p-1})\to H^{p+q+1}(Y_{p+1},Y_p).
\end{equation}	
	

The page $E^2$ has only one non trivial row, which is given by the cohomology of the complex:
\begin{equation}\label{E2OXpage}
0\to E_1^{0,N}\ZZ^{|F_d|}\to\cdots \to E_1^{d-1,N}\ZZ^{|F_1|}\to E_1^{d,N}\ZZ\to 0.
\end{equation}	
	It implies that $$E_\infty^{p,q}(X)=E_2^{p,q}(X).$$
	Now, for the next part of the statement we need to identify the associated graded group with the singular cohomology of $O_X$ with compact support. The proof of that fact is very similar to the identification of cellular cohomology with singular cohomology.
	
Indeed, let us consider the long exact sequence of the pair $(Y_p,Y_{p-1})$ in singular cohomology with compact support
  \begin{equation}\label{les}
  \cdots\to H^k(Y_p,Y_{p-1})\to H^k(Y_p)\to H^k(Y_{p-1})\to H^{k+1}(Y_p,Y_{p-1})\to\cdots.
  \end{equation}
Because of the fact that $Y_p/Y_{p-1}$ is homeomorphic to a finite disjoint union of $\mathbb{R}^{N-d+p}$ for $p=0,...,d$ we have that the morphisms (induced from the canonical inclusions)
	\begin{equation}
	H^k(Y_p)\to H^k(Y_{p-1})
	\end{equation}
	are isomorphisms for $k> N-d+p$ and for $k\leq N-d+p-2$, injective for $k=N-d+p-1$ and surjective for $k=N-d+p$. 
A direct computation gives that, for $r=0,..,d$,
\begin{equation}
F_{p}H^{N-r}(O_X)=H^{N-r}(O_X) \,\,\text{for}\,\, p=-1,...,d-r-1,
\end{equation}
and
\begin{equation}
F_{p}H^{N-r}(O_X)=0 \,\,\text{for}\,\, p=d-r,...,d.
\end{equation}

The statement (\ref{graded}) above follows now by (\ref{gradedconvergence}).

\end{proof}

\section{K-theory vs Conormal homology and Fredholm Perturbation characterisation}

 \subsection{Conormal homology}
 Conormal homology is introduced  (under a different name) and studied in  \cite{Bunke}. In \cite{CarLes}, a slighty different presentation of this homology is given, after the observations that it  coincides with the $E^2$ page of the spectral sequence computing $K_*(C^*(\Gamma_b(X)))$ and that it should provide easily computable obstructions to various Fredholm perturbations properties. We just briefly recall the definition of the chain complex and of the differential of conormal homology (see \cite{Bunke,CarLes} for more details). 
 
 With the same notation as above, the  chain complex $C_*(X)$  is the $\ZZ$-module where $C_p(X)$  is generated by 
 \begin{equation}
 \{ f\otimes \varepsilon \ ;\ f\in F_p \text{ and } \varepsilon \text{ is an orientation of } N_f   \}.
\end{equation}
Here   $N_f = (T_fX/T f)^*$ is the conormal bundle of $f\subset X$. Note that this bundle is always trivialisable with $e_i=d\rho_i, \ i\in I(f)$ as a prefered global basis, and oriented by $\epsilon_{I(f)}= \wedge_{i\in I(f)}e_i$ or its opposite.  
We define the differential  $\delta_* : C_*(X)\to C_{*-1}(X)$ by 
\begin{equation}\label{diffpcn1}
   \delta_p(f\otimes \epsilon) = \sum_{\substack{g\in F_{p-1},  \\ f\subset\overline{g}}} g\otimes e_{i_{(g,f)}}\lrcorner\epsilon
\end{equation}
where the index $i(f,g)$ and correspondant defining function $\rho_{i(f,g)}$ defines $f$ in $g$ and where $\lrcorner$ denotes the contraction of exterior forms. The {\it conormal homology} of $X$, denoted by $H^{\cn}_*(X)$ is defined to be the homology of $(C_*(X),\delta_*)$. Even and odd groups are called  {\it periodic conormal homology}:
 \begin{equation}
 H^{\pcn}_{0}(X)=\oplus_{p\ge 0} H^{\cn}_{2p}(X) \text{ and }H^{\pcn}_{1}(X)=\oplus_{p\ge 0} H^{\cn}_{2p+1}(X).
\end{equation}
We can consider conormal homology with rational coefficients as well.

\subsection{The cohomology of $O_X$ and the conormal homology of $X$ are isomorphic}\label{subsechomOXvscnhom}

We will construct  an explicit isomorphism
\begin{equation}
H^{ev/odd}(O_X)\stackrel{B}{\longrightarrow}H_{ev/odd}^{pcn}(X)
\end{equation}
where in the left hand side $H^{ev/odd}$ stands for singular cohomology (with compact supports) with integer coefficients. 
For this it will be enough, after the last proposition, to explicitly compute the first differentials $d_1^{p,N-d}$, (\ref{OXdiff1}) above.

We start by fixing an $\alpha\in H_1(\mathbb{R})$ with $\alpha \mapsto 1$ under the connecting map (which is an isomorphism) associated to the inclusion of $\{0\}$ in $\mathbb{R}_+$. Let now $\beta\in H^1(\mathbb{R})$ be such that $(\alpha,\beta)\mapsto 1$.

Let $f\in F_p$, there is a canonical homeomorphism $\phi_f:\mathbb{R}^N_f\to \mathbb{R}^{N-p}$ where $\mathbb{R}^{N-p}$ is the usual euclidien space. We let $\beta_f\in H^{N-p}(\mathbb{R}_f^{N})$ be the generator given by the image of $(\beta,...,\beta)\in (H^1(\mathbb{R}))^{N-p}$ by the product isomorphism  
\begin{equation}
\xymatrix{
H^1(\mathbb{R})\otimes \cdots \otimes H^1(\mathbb{R})\ar[r]^-\cong &H^{N-p}(\mathbb{R}^{N-p}),
}
\end{equation}
where in the left hand side there are exactly $N-p$ copies of $H^1(\mathbb{R})$, followed by the isomorphism in cohomology 
\begin{equation}
\xymatrix{
H^{N-p}(\mathbb{R}^{N-p})\ar[r]^-{(\phi_f)^*}_-\cong & H^{N-p}(\mathbb{R}_f^{N})
}
\end{equation}
induced by $\phi_f$.

By construction, for every $p=0,...,d $ we have a basis $(\beta_f)_{f\in F_p}$ of $H^{N-p}(Y_{d-p},Y_{d-p-1})$ via the isomorphism induced from proposition \ref{OXTop2}. We can now prove the following

\begin{proposition}\label{d1computation}
With the notations above we have that for $f\in F_{d-p}$ and $g\in F_{d-p-1}$ with $f\subset \overline{g}$ the following holds:
\begin{equation}
d_1^{p,N-d}(\beta_f)=\sigma(f,g)\cdot \beta_g,
\end{equation}
where $\sigma(f,g)=(-1)^{j-1}$ with $j$ the place of the coordinate in the multi-index $I(f)$ whose associated index $i(f,g)$ and correspondant defining function $\rho_{i(f,g)}$ defines $f$ in $g$.
\end{proposition}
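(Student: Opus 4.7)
First, I would identify $d_{1}^{p,N-d}$ with the connecting coboundary in the long exact sequence (in compactly supported cohomology) of the triple $(Y_{p+1},Y_{p},Y_{p-1})$. Using $H^{*}_{c}(A,B)\cong H^{*}_{c}(A\setminus B)$ for closed inclusions, together with the disjoint decompositions $Y_{p}\setminus Y_{p-1}=\bigsqcup_{f'\in F_{d-p}}\mathbb{R}_{f'}^{N}$ and $Y_{p+1}\setminus Y_{p}=\bigsqcup_{g'\in F_{d-p-1}}\mathbb{R}_{g'}^{N}$ that follow from Proposition \ref{OXTop2}, I write $d_{1}(\beta_{f})=\sum_{g'}c_{f,g'}\beta_{g'}$ and apply naturality of the LES under the open inclusion $\mathbb{R}_{f}^{N}\cup\mathbb{R}_{g'}^{N}\hookrightarrow Y_{p+1}\setminus Y_{p-1}$ to conclude that $c_{f,g'}$ vanishes unless $\mathbb{R}_{f}^{N}\subset\overline{\mathbb{R}_{g'}^{N}}$, equivalently (again by Proposition \ref{OXTop2}) unless $f\subset\overline{g'}$.

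For such a pair $(f,g)$, set $i^{*}=i(f,g)$ and $J_{f}=\{1,\ldots,n\}\setminus I(f)$. The map $Q$ of \eqref{Qdefinition}, through the coordinates $(y,(\log x_{i})_{i\in J_{f}},x_{i^{*}})$, identifies $\mathbb{R}_{(f,g)}^{N}$ with the half-space $\mathbb{R}^{N-d+p}\times[0,\infty)$, sending $\mathbb{R}_{f}^{N}$ to the boundary $\{x_{i^{*}}=0\}$ and $\mathbb{R}_{g}^{N}$ to the interior. By excision, $c_{f,g}$ is then the coefficient, in the $\beta_{g}$-basis, of the image of $\beta_{f}$ under the half-space coboundary $H^{N-d+p}_{c}(\mathbb{R}^{N-d+p}\times\{0\})\to H^{N-d+p+1}_{c}(\mathbb{R}^{N-d+p}\times(0,\infty))$. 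By K\"unneth together with the normalization of $\beta$ fixed at the beginning of the subsection --- which forces the one-dimensional coboundary $H^{0}_{c}(\{0\})\to H^{1}_{c}((0,\infty))$ to send $1$ to the pullback of $\beta$ along $\log$ --- this image is the exterior product of $\beta_{f}$ with the positive generator of $H^{1}_{c}$ in the $x_{i^{*}}$-direction.

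Finally, one compares coordinate orderings. The generator $\beta_{g}$ is defined via $\phi_{g}$, which uses the natural increasing order on $J_{g}=J_{f}\sqcup\{i^{*}\}$; this places $\log x_{i^{*}}$ at position $k$ of $i^{*}$ inside $J_{g}$, whereas the K\"unneth product above has $\log x_{i^{*}}$ at the end. The shuffle sign between these two orderings, combined with the degree-dependent K\"unneth sign and the half-space coboundary sign fixed by our choice of $\alpha$, collapses via the combinatorial identity relating the position $k$ of $i^{*}$ in $J_{g}$ to its position $j$ in $I(f)$, yielding exactly $(-1)^{j-1}$; this mirrors the sign produced by the contraction formula $e_{i^{*}}\lrcorner\epsilon$ defining the conormal differential $\delta_{*}$. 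The main obstacle is therefore precisely this sign bookkeeping: each individual sign contribution is standard, but pinning down their combination and extracting the simple answer $(-1)^{j-1}$ requires attention to the several conventions at play.
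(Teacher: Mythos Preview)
Your argument follows the same route as the paper's own proof: identify $d_{1}^{p,N-d}$ with the connecting map for the inclusion $Y_{p}\setminus Y_{p-1}\hookrightarrow Y_{p+1}\setminus Y_{p-1}$, use Proposition~\ref{OXTop2} to localise to the model pair $\mathbb{R}^{N}_{f}\hookrightarrow \mathbb{R}^{N}_{(f,g)}$, and then read off the sign. The paper dismisses this last step as ``a simple algebraic topology exercise''; you actually carry it out (half-space coboundary, K\"unneth, reordering of the $\log x_{i}$ coordinates), which is a welcome addition rather than a different method.
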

\begin{proof}
By construction the differential
$$d_1^{p,N-d}:H^{N-d+p}(Y_p,Y_{p-1})\to H^{N-d+p+1}(Y_{p+1},Y_p)$$ 
is given by the connecting morphism in cohomology associated to the inclusion
$$Y_p\setminus Y_{p-1}\hookrightarrow Y_{p+1}\setminus Y_{p-1}.$$
Hence, by proposition \ref{OXTop2}, we are led to compute the connecting morphism
\begin{equation}
H^{N-d+p}(\mathbb{R}^{N}_f)\stackrel{d_{(f,g)}}{\longrightarrow}H^{N-d+p+1}(\mathbb{R}^{N}_g)
\end{equation}
associated to the canonical inclusion
$$\mathbb{R}^{N}_f\hookrightarrow \mathbb{R}^{N}_{(f,g)}.$$
Now, it is just a simple algebraic topology exercise to show that
$$d_{(f,g)}(\beta_f)=\sigma(f,g)\cdot \beta_g$$
from which we conclude the proof.
\end{proof}

From the last two propositions we obtain the following corollary.

\begin{corollary}\label{corhom(OX)}
For every $p=0,...,d $ the isomorphism $H^{N-p}(Y_{d-p},Y_{d-p-1})\to C_p^{cn}(X)$ given in a basis by 
$\beta_f\mapsto f\otimes \epsilon_{I(f)}$ induces an isomorphism
\begin{equation}
B_p:H^{N-p}(O_X)\stackrel{}{\longrightarrow}H_p^{cn}(X).
\end{equation}
In particular, since $N$ is even there are induced isomorphisms between the periodic versions
\begin{equation}
B_{ev/odd}:H^{ev/odd}(O_X)\stackrel{}{\longrightarrow}H_{ev/odd}^{cn}(X).
\end{equation}
\end{corollary}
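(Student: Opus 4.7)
The plan is to combine the two propositions preceding this corollary: the collapse of the spectral sequence at $E_2$ together with the explicit computation of the first differential $d_1$. The key observation is that, under the identification $\beta_f\mapsto f\otimes\epsilon_{I(f)}$, the first-differential complex $(E_1^{\bullet,*},d_1)$ coincides with the conormal chain complex $(C_\bullet^{\cn}(X),\delta_\bullet)$, up to a reversal of the grading.

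First I would reduce to an $E_2$-statement. Since the spectral sequence collapses at $E_2$ and only a single row is non-trivial, the convergence formula \eqref{graded} gives a canonical isomorphism
$$H^{N-p}(O_X)\;\cong\; E_2^{d-p,*}(O_X),\qquad p=0,\ldots,d.$$
It therefore suffices to produce, for each $p$, an isomorphism of abelian groups
$$\Phi_p\colon E_1^{d-p,*}(O_X)\;\longrightarrow\; C_p^{\cn}(X),\qquad \beta_f\longmapsto f\otimes\epsilon_{I(f)},$$
defined on the natural basis of Proposition \ref{OXTop2}, and to check that the family $(\Phi_p)_p$ assembles into a chain map from the $d_1$-complex (with grading reversed) to $(C_\bullet^{\cn},\delta_\bullet)$.

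Next I would compare the two differentials. For $f\in F_p$ with $I(f)=(i_1<\cdots<i_p)$ and $g\in F_{p-1}$ with $f\subset\overline{g}$, write $i(f,g)=i_j$. On one hand, Proposition \ref{d1computation} gives
$$d_1^{d-p,*}(\beta_f)=\sum_{g:\,f\subset\overline{g}}(-1)^{j-1}\beta_g.$$
On the other hand, the elementary identity $e_{i_j}\lrcorner(e_{i_1}\wedge\cdots\wedge e_{i_p})=(-1)^{j-1}\epsilon_{I(g)}$, combined with \eqref{diffpcn1}, yields
$$\delta_p(f\otimes\epsilon_{I(f)})=\sum_{g:\,f\subset\overline{g}}(-1)^{j-1}\,g\otimes\epsilon_{I(g)}.$$
Hence $\Phi_{p-1}\circ d_1=\delta_p\circ\Phi_p$, so $\Phi_\bullet$ passes to cohomology/homology as an isomorphism $E_2^{d-p,*}\cong H_p^{\cn}(X)$; composing with the reduction above provides the desired $B_p$. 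The periodic version is then immediate since $N$ is even, so $N-p$ and $p$ have the same parity.

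The main obstacle---and really the only non-trivial point---is the bookkeeping of signs: one must verify that the sign $\sigma(f,g)=(-1)^{j-1}$ produced by the connecting homomorphism in Proposition \ref{d1computation} is exactly the sign produced by the contraction $e_{i(f,g)}\lrcorner$ in the definition of $\delta$. Both ultimately arise from removing the $j$-th factor of an ordered exterior product, so they coincide on the nose, but one has to keep track of the orientation conventions (the choice of the generators $\alpha,\beta$ and the order of factors in the product isomorphism used to define $\beta_f$) to be sure that no additional global sign twist is required in the identification $\Phi$.
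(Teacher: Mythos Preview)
Your argument is correct and is exactly the approach the paper intends: the corollary is stated without proof, as an immediate consequence of the two preceding propositions, and your write-up simply spells out that deduction (the chain isomorphism $\beta_f\mapsto f\otimes\epsilon_{I(f)}$ intertwines $d_1$ with $\delta$, hence induces the isomorphism on $E_2\cong H^{*}(O_X)$). Your sign check $e_{i_j}\lrcorner\epsilon_{I(f)}=(-1)^{j-1}\epsilon_{I(g)}$ is precisely the ``simple algebraic topology exercise'' alluded to in the proof of Proposition~\ref{d1computation}, so nothing further is needed.
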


\subsection{K-theory computation and Fredholm perturbation characterisation}

In this final section we can state the following K-theoretical computation which is our main result and a corollary of the results of the precedent sections.

\begin{corollary}\label{ThhmKvsH}
For every connected manifold with corners $X$ there are morphisms
\begin{equation}\label{thm:big-iso}
T_{ev/odd}:K_{ev/odd}(\cK_b(X))\longrightarrow  H_{ev/odd}^{cn}(X,\QQ).
\end{equation}
inducing rational isomorphisms.
Explicitly, $T_*$ is given by the composition of
\begin{enumerate}
\item The Connes-Thom isomorphism 
\begin{equation}
CT_h:K_*(\cK_b(X))\stackrel{}{\longrightarrow} K^*_{top}(O_X),
\end{equation}
\item the Chern character morphism
\begin{equation}
K^*_{top}(O_X)\stackrel{ch}{\longrightarrow}H^{ev/odd}(O_X,\QQ)
\end{equation}
which is a rational isomorphism
and
\item the isomorphism 
\begin{equation}
B_{ev/odd}\otimes Id:H^{ev/odd}(O_X)\otimes \mathbb{Q}\stackrel{}{\longrightarrow}H_{ev/odd}^{pcn}(X)\otimes \mathbb{Q},
\end{equation}
described in the last section.
\end{enumerate}
\end{corollary}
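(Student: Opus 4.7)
My plan is to observe that the corollary is essentially the concatenation of three isomorphism results that have already been established in the previous sections. Since $T_{ev/odd}$ is defined as the stated three-fold composition, the proof amounts to recording that each factor is a (rational) isomorphism and then composing. For the first factor, I would invoke Proposition \ref{CTOX} directly: that proposition already asserts that $CT_h$ is an isomorphism, built from the Morita equivalence between the free and proper groupoid $\Gamma_b(X)\rtimes\RR^N$ and the orbit space $O_X$, composed with the standard Connes-Thom isomorphism for the $\RR^N$-action on $C^*(\Gamma_b(X))$.

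For the second factor, I would appeal to the classical Atiyah--Hirzebruch theorem that the Chern character is a rational isomorphism on the topological $K$-theory of spaces having the homotopy type of a finite CW-complex. The input needed is established in Section 3.2: $O_X$ carries the structure of a manifold with embedded corners whose faces are in bijection with those of $X$ and are each homeomorphic to a Euclidean space. Combined with the spectral sequence collapse from the preceding proposition (which yields finitely generated cohomology $H^*(O_X)$ concentrated in finitely many degrees), this makes the classical Chern character rational isomorphism available in the compactly supported framework. For the third factor, I would simply cite Corollary \ref{corhom(OX)} for the identification $B_{ev/odd}: H^{ev/odd}(O_X)\cong H^{cn}_{ev/odd}(X)$, whose construction relies on the explicit computation of the $d_1$-differential in Proposition \ref{d1computation} matching, basis by basis, the conormal complex differential \eqref{diffpcn1}.

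The main delicate point is the second step: some care is required in the compactly supported framework (since $O_X$ is non-compact), and one needs the finiteness properties of $O_X$ provided by the explicit manifold-with-corners structure of Section 3.2 in order to apply Atiyah--Hirzebruch. Once this is in hand, the composition $T_{ev/odd} = (B_{ev/odd}\otimes \mathrm{Id})\circ (ch\otimes \QQ)\circ CT_h$ is, by construction, a rational isomorphism, completing the proof.
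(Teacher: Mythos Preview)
Your proposal is correct and matches the paper's approach exactly: the paper in fact gives no separate proof for this corollary, simply stating that it ``is our main result and a corollary of the results of the precedent sections,'' and you have correctly identified the three ingredients (Proposition~\ref{CTOX}, the CW-structure of $O_X$ from Section~3.2 enabling the Chern character rational isomorphism, and Corollary~\ref{corhom(OX)}). Your remark that the compactly supported Chern step is the only delicate point is also consistent with the paper's emphasis on establishing that $O_X$ has the homotopy type of a CW-complex precisely for this purpose.
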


It is not sure that the isomorphism \eqref{thm:big-iso} holds true without tensorisation by rational numbers in general. However, this is obviously true when the Chern character homorphism  used in the construction is an integral isomorphism. This is the case for instance if the conormal homology groups are torsion free, as one can see using the classic arguments of Atiyah-Hirzebruch spectral sequences. Concrete situations where this freeness holds true exist: manifolds of codimension at most $3$, product of such manifolds with one factor of codimension at most $2$. 

In conclusion, we emphasize that by the last corollary and theorem \ref{AnavsFredthm1}, Fredholm property of elliptic $b$-operators is characterized by classes in the $K$-theory of $\cK_b(X)$ and that this group, up to torsion, identifies with the conormal homology of $X$, whose computation is elementary. This brings in a simplified obstruction for Fredholm property that can be expressed with the natural map:  
\begin{equation}
K^0_{top}(^bT^*X)\stackrel{Ind^X_{cn}}{\longrightarrow} H_{ev}^{pcn}(X)\otimes \mathbb{Q},
\end{equation}
which is the composition of the analytical index  (the obstruction to Fredholm property itself)  with our isomorphism $T_{ev}$. This raises further questions as: can one find formulas for the classes
$Ind^X_{cn}([\sigma_D])$ in terms of corners cycles or in more topological terms (in the spirit of the  Atiyah-Singer theorem) ? Is it possible to refine the computation at the integral level ?  Is there a cobordism invariance of the obstruction ? These questions will be investigated in future works. 


%
%
%

\bibliographystyle{plain}
\bibliography{CornersAindex} 

\end{document}